\newtheorem{theorem}{Theorem}[section]
\newtheorem{proposition}[theorem]{Proposition}
\newtheorem{corollary}[theorem]{Corollary}
\newcommand{\ora}[1]{\overrightarrow{#1}}
\DeclareMathOperator{\dic}{\ora \chi}
\newcommand{\R}{\mathbb{R}}
\title{Dichromatic Number and Cycle Inversions}
\author{Pierre Charbit and Stéphan Thomassé}
\date{3 Janvier 2020}                                           
\begin{document}
\maketitle

\begin{abstract}
The results of this note were stated in the first author PhD manuscript in 2006  but never published. The writing of the proof given there was slightly careless and the proof itself scattered accross the document, the goal of this note is to give a short and clear proof using Farkas Lemma. The first theorem is a characterization of the acyclic chromatic number of a digraph in terms of cyclic ordering. Using this theorem we prove that for any digraph, one can sequentially reverse the orientations of the arcs of a family of directed cycles so that the resulting digraph has acyclic chromatic number at most $2$.
\end{abstract}

\section{Introduction}
In this paper all directed graphs (\textit{digraphs} in short) are simple, i.e. contain no loop and no multi-arc. 
Given a digraph $D$, we denote by $V(D)$ its set of vertices and by $A(D)$ its set of arcs. 
%
An {\em acyclic colouring} of a digraph is an assignment of colours to the vertices such that each colour class induces an {\em acyclic subdigraph}, that is a subdigraph containing no directed cycle.  The \textit{acyclic chromatic number}, or simply \emph{dichromatic number}, of a digraph $D$, is defined to be the smallest number of colours required for an acyclic colouring of $D$ and is denoted $\dic(D)$. This notion was first introduced in 1982 by Neumann-Lara \cite{NL82} and  seems to be the natural generalization for digraphs of the usual chromatic number. 

The main result is to give a characterization of the existence of a proper $k$-acyclic colouring of a digraph. From it we deduce a theorem about circuit inversions, which was extended in 2020 for infinite digraphs by Ellis in Soukup in \cite{infinite}.

\section{The main result and its proof}

\begin{theorem}\label{thm:main}
A digraph $D$ can be partitioned into $k$ acyclic subgraphs (i.e. $\dic(D)\leq k$) if and only if there is an ordering $\sigma$ of the vertices of $D$ such that for any circuit $C$, the number of arcs of $C$ going forward in $\sigma$ is at least $|C|/k$.
\end{theorem}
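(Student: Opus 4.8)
The plan is to prove the two directions separately; the forward implication is a short counting argument and the reverse one is where Farkas' Lemma does the work.

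For the forward direction, assume $V(D)=V_1\cup\cdots\cup V_k$ with each $D[V_i]$ acyclic, and build $\sigma$ by concatenating a topological ordering of $D[V_1]$, then one of $D[V_2]$, and so on; write $c(v)\in\{1,\dots,k\}$ for the class of $v$. An arc $uv$ is backward in $\sigma$ if and only if $c(u)>c(v)$, since arcs inside a class follow its topological ordering and hence go forward. So for a circuit $C=v_0v_1\cdots v_{\ell-1}v_0$ the number of backward arcs equals the number of strict descents of the cyclic word $c(v_0)c(v_1)\cdots c(v_{\ell-1})$. I would then isolate the elementary lemma: a cyclic word of length $\ell$ over the alphabet $\{1,\dots,k\}$ has at most $\tfrac{k-1}{k}\ell$ strict descents. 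Its proof: a maximal block of consecutive descent positions has length at most $k-1$ (it realizes a strictly decreasing run among $k$ symbols), and, cyclically, each such block is followed by at least one non-descent position; with $R$ blocks and $D$ descents this gives $D\le R(k-1)$ and $\ell\ge D+R$, hence $\ell\ge\tfrac{k}{k-1}D$. Therefore $C$ has at least $|C|-\tfrac{k-1}{k}|C|=|C|/k$ forward arcs.

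For the reverse direction, given $\sigma$ I would put a weight on each arc: $k-1$ if it is forward in $\sigma$ and $-1$ if it is backward. A circuit $C$ with $f$ forward arcs then has total weight $f(k-1)-(|C|-f)=kf-|C|$, which is nonnegative exactly when $f\ge|C|/k$. So the hypothesis is precisely that $D$, with these weights, has no negative circuit. By Farkas' Lemma (equivalently, the feasible-potential theorem) there is $p:V(D)\to\R$ with $p(v)-p(u)\le w(uv)$ for every arc $uv$; and since the weights are integers, $p$ can be chosen integer-valued, e.g.\ as shortest-path distances from an auxiliary source joined to all vertices by zero-weight arcs. Explicitly, $p(v)-p(u)\le k-1$ on every forward arc and $p(v)-p(u)\le -1$ on every backward arc. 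To conclude, colour $v$ by $p(v)\bmod k$. If a colour class contained a circuit $C=v_0\cdots v_{\ell-1}v_0$, then along each arc the jump $p(v_{i+1})-p(v_i)$ is a multiple of $k$ and at most $k-1$, hence $\le 0$; as these jumps sum to $0$ around $C$, each jump is $0$; but a backward arc would force a jump $\le -1$, so $C$ uses only forward arcs — impossible for a circuit. Thus each colour class is acyclic and $\dic(D)\le k$.

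The genuinely substantive step is the reverse direction, and the crux there is recognizing that weighting forward arcs by $k-1$ and backward arcs by $-1$ converts the ``a $1/k$ fraction of every circuit goes forward'' condition into a clean no-negative-circuit statement; after that, integrality of the potential and the telescoping/mod-$k$ argument are routine. On the forward side the only point needing care is the cyclic-descent count, where the bound must be extracted from the block structure of descents rather than from any arc-by-arc estimate.
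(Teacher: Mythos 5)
Your proof is correct and follows essentially the same route as the paper: for the easy direction, concatenated topological orders of the colour classes plus a count of consecutive descents (the paper phrases your block lemma as ``at most $k-1$ successive backward arcs''), and for the converse, an arc weighting that turns the hypothesis into a no-negative-circuit condition, a potential supplied by Farkas' Lemma, and a colouring read off from the potential. The only difference is cosmetic: you scale the paper's weights $forw_{\sigma}(a)-1/k$ by $k$ to get integer weights and hence an integer potential, so the paper's intervals $[i+\tfrac{t}{k},i+\tfrac{t+1}{k})$ become residues modulo $k$.
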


Denote by $n=|V(D)|$ and $m=|A(D)|$ the number of vertices and arcs of $D$. In order to prove the result we consider the classical incidence matrix of the digraph which is a $n\times m $ matrix that we will denote $M_D$, where each column corresponds to an arc $a=xy$ of $D$ and contains only two non zero entries : an entry $(-1)$ for the row corresponding to the tail $x$ and entry $1$ for one corresponding to the head $y$. For a vector $z\in \R^n$ seen as a matrix with one column, ${}^{t}z$ denotes the row vector obtained by transposition.

\ 
\begin{proposition}
Let $D=(V,A)$ be a digraph with $n$ vertices and $m$ arcs, and denote by $M=M_D$ its incidence matrix. Let $p\in R^{m}$  be a weighting of the arcs, seen as a column vector. The three following propositions are equivalent : 
\begin{enumerate}
\item $\forall C \mbox{ directed circuit of } D,  \sum_{a\in A(C)}p(a) \geq 0$
\item $\forall c\in \R_{+}^{m}\ \ M.c=0 \Rightarrow \ {}^{t}p.c\, \geq 0 $
\item $\exists z \in \R^{n},\ {}^{t}z.M\leq p$
\end{enumerate}
\end{proposition}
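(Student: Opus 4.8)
The plan is to prove the cycle of implications $(3)\Rightarrow(1)\Rightarrow(2)\Rightarrow(3)$, where the first two are elementary and the last is an application of Farkas' Lemma.

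For $(3)\Rightarrow(1)$: suppose $z\in\R^n$ satisfies ${}^{t}z.M\le p$ coordinatewise. Fix a directed circuit $C$ and let $c_C\in\R_+^m$ be its characteristic vector (a $1$ on each arc of $C$, a $0$ elsewhere). Since every vertex visited by $C$ has exactly one in-arc and one out-arc of $C$ incident to it, the telescoping sum gives $M.c_C=0$, hence ${}^{t}z.M.c_C=0$. On the other hand, ${}^{t}z.M\le p$ and $c_C\ge 0$ give ${}^{t}z.M.c_C\le {}^{t}p.c_C=\sum_{a\in A(C)}p(a)$, so $\sum_{a\in A(C)}p(a)\ge 0$.

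For $(1)\Rightarrow(2)$: let $c\in\R_+^m$ with $M.c=0$. The condition $M.c=0$ says precisely that $c$, viewed as a nonnegative arc weighting, is a circulation (conservation of flow at every vertex). By the flow decomposition theorem, $c$ can be written as a nonnegative combination $c=\sum_i \lambda_i c_{C_i}$ of characteristic vectors of directed circuits $C_i$ with $\lambda_i\ge 0$; one obtains this by repeatedly peeling off a directed circuit supported on the current positive support and decreasing along it, the support being guaranteed to contain a circuit because flow conservation forbids a nonempty acyclic positive support. Then ${}^{t}p.c=\sum_i\lambda_i\sum_{a\in A(C_i)}p(a)\ge 0$ by $(1)$.

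For $(2)\Rightarrow(3)$: this is exactly the alternative form of Farkas' Lemma. The system ${}^{t}z.M\le p$ in the unknown $z\in\R^n$ (equivalently ${}^{t}M.z\le p$) is infeasible if and only if there exists $c\in\R_+^m$ with ${}^{t}M^{t}.c$... more cleanly: by Farkas, either there exists $z$ with ${}^{t}M z\le p$, or there exists $c\ge 0$ with $M c=0$ and ${}^{t}p.c<0$; these are mutually exclusive. Statement $(2)$ rules out the second alternative, so the first must hold, giving $(3)$. I expect the only subtlety is bookkeeping the transpose/sign conventions so that the version of Farkas invoked matches the matrix $M$ here; the flow-decomposition step in $(1)\Rightarrow(2)$ is standard but is the one place where a short argument (rather than a citation) should be spelled out.
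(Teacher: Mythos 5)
Your proposal is correct and follows essentially the same route as the paper: the equivalence of (1) and (2) via decomposing a nonnegative kernel vector (a circulation) into directed circuits, and the equivalence with (3) via the inequality form of Farkas' Lemma. The only difference is organizational (a cycle of implications with a direct, elementary proof of $(3)\Rightarrow(1)$ instead of two bi-implications), which does not change the substance.
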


\begin{proof}
The first two are easily seen to be equivalent. Each positive vector in the kernel is a weighting such that for every vertex the sum of the weights of the out arcs is equal to the sum of the weights of the arcs entering it. There must be a directed circuit of positive arcs. One can remove this weighted circuit and iterate.

The equivalence between (2) and (3) is one version of the classical Farkas Lemma (see for example \cite{MG}, page 92)
\end{proof}

\begin{proof}[of Theorem \ref{thm:main}]
The implication from left to right is easy : we first put all vertices of the first color class in a way that all arcs go forward, then do similarly for a second color class, and so on. The only backward arcs go from a color class to another one placed before, so there cannot be more than $k-1$ successive back arcs in any directed path, which proves the desired inequality for any circuit.

Let us prove now the converse direction and let $\sigma$ be an ordering of the vertices satisfying the conditions of the theorem. We define the following weighting of the arcs : 
$$p(a)=forw_{\sigma}(a)-1/k$$ where $forw_{\sigma}(a)=1$ if $a$ forward arc in $\sigma$ and $0$ otherwise. 
 
The hypothesis on $\sigma$ easily implies that every directed circuit has non negative weight for $p$. So by Proposition 1, there exists $z \in \R^{V(D)}$ such that 
$$\forall a=xy \in A, z(y)-z(x)\leq forw_{\sigma}(a)-1/k,$$

For $i\in \mathbb Z$ and $t\in \{0,\ldots,k-1\}$, let $I_{i,t}= [i+\frac t k ,i+ \frac {t+1} k[$ et $D_{t}=z^{-1}(\cup_{i\in \mathbb Z}I_{i,t})$.

Now we claim that $D_{t}$ induces an acyclic digraph. Indeed :
\begin{itemize}
\item There is no arc from $I_{i,t}$ to $I_{j,t}$ with $i<j$, since $ z(y)-z(x)\leq (1-1/k)$ for every arc.
\item If $a=xy$ is an arc with $z(x)$ and $z(y)$ both in $I_{i,t}$ then  necessarily  $z(y)-z(x)>-1/k$  so necessarily $forw_{\sigma}(a)=1$. Therefore the digraph induced by $I(i,t)$ must be acyclic.
\end{itemize}
This concludes the proof
\end{proof}

\section{A corollary about cycle inversions}
\begin{corollary}
For any digraph $D$, it is possible to reverse some of its circuits so that the resulting digraph $D'$ has acylic chromatic number at most $2$.
\end{corollary}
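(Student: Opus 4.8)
The plan is to deduce the statement from Theorem~\ref{thm:main} applied with $k=2$. Fix once and for all an arbitrary linear ordering $\sigma$ of $V(D)$; since reversing circuits does not touch the vertex set, every digraph produced below will still be a digraph on the same ordered vertex set. By Theorem~\ref{thm:main}, a digraph on $V(D)$ has dichromatic number at most $2$ as soon as every one of its circuits $C$ has at least $|C|/2$ arcs going forward in $\sigma$; let me call such a circuit \emph{balanced}, and call $C$ \emph{bad} otherwise, i.e.\ when it has strictly more backward than forward arcs in $\sigma$. So it suffices to turn $D$, through a sequence of circuit reversals, into a digraph with no bad circuit.

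For this I would run the obvious greedy process. Set $D_0=D$; given $D_i$, if it contains a bad circuit $C$, let $D_{i+1}$ be obtained from $D_i$ by reversing every arc of $C$, and otherwise stop. Let $b(D_i)$ denote the number of arcs of $D_i$ that are backward in $\sigma$. Reversing a bad circuit $C$ with $f$ forward arcs and $b>f$ backward arcs turns those $b$ backward arcs into forward arcs and those $f$ forward arcs into backward arcs, and leaves all other arcs unchanged, so $b(D_{i+1}) = b(D_i)-(b-f) < b(D_i)$. Since $b(\cdot)$ is a non-negative integer, the process must terminate, and the digraph $D'$ at which it stops has no bad circuit; hence $\dic(D')\le 2$ by Theorem~\ref{thm:main}. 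This global potential argument is the heart of the proof and is essentially immediate once the reduction to Theorem~\ref{thm:main} is in place.

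The only point requiring care — and the one I expect to be the main obstacle — is to make sure the process stays inside the class of simple digraphs: reversing a circuit $C$ that uses an arc $xy$ whose reverse $yx$ already lies in $A(D_i)\setminus A(C)$ would create a pair of parallel arcs (reversing can never create a loop, since a circuit has length at least $2$ and runs through distinct vertices, so looplessness is automatic). I would handle this by allowing the intermediate objects to be multidigraphs — the incidence matrix plus Farkas argument underlying Theorem~\ref{thm:main} is insensitive to parallel arcs — and by arguing that at each step a bad circuit can be chosen so that no such parallel pair arises unless $\dic(D_i)\le 2$ already, in which case one simply stops. Modulo this bookkeeping around digons, the two paragraphs above establish the corollary.
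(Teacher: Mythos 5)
Your proof is correct and is essentially the paper's own argument: fix an arbitrary ordering, repeatedly reverse any circuit having more backward than forward arcs, note that the number of forward arcs strictly increases so the process terminates, and conclude via Theorem~\ref{thm:main} with $k=2$. The only divergence is your third paragraph on parallel arcs, a point the paper silently ignores; your multidigraph resolution is sound (the incidence-matrix/Farkas argument is indeed insensitive to parallel arcs, and the potential argument still terminates since the total number of arcs is preserved), whereas your alternative claim that a bad circuit can always be chosen so as to avoid creating a duplicate arc is neither justified nor needed.
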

{\noindent \bf Preuve :}\\
Pick an arbitrary order. As long as there is one circuit whose number of backward arcs is larger than its number of forward arcs, reverse it. By doing this the total number of forward arcs always increase and since it cannot increase infinitely, one gets the desired ordering.
\qed

\end{document}